\newcommand\diag{\operatorname{diag}}
\newcommand\Dir{\operatorname{Dir}}
\newcommand\cH{\mathcal H}
\newcommand\tk{\tilde k}
\newcommand\bn{\boldsymbol n}
\newcommand\hx{\widehat{x}}
\newcommand\hg{\widehat{g}}
\newcommand\hmu{\widehat{\mu}}
\newcommand\Ker{\operatorname{ker}}
\newcommand\cC{\mathcal{C}}
\newcommand\cL{\mathcal{L}}
\newcommand\cS{\mathcal{S}}
\newcommand\cD{\mathcal{D}}
\renewcommand\Im{\operatorname{Im}}
\newcommand\bbN{\mathbb N}
\newcommand\bbR{\mathbb R}
\newcommand\pa{\partial}
\newcommand\restrictedto{\upharpoonright}
\newcommand\Id{\operatorname{Id}}
\newcommand\Sgn{\operatorname{sgn}}
\newtheorem{theorem}{Theorem}
\theoremstyle{definition}
\theoremstyle{remark}
\newtheorem{remark}{Remark}
\begin{document}
\title{On the stability of time-domain integral equations for acoustic wave propagation} 

\author{Charles L. Epstein,\footnote{
    Depts. of Mathematics and Radiology, University of Pennsylvania,
    209 South 33rd Street, Philadelphia, PA 19104. E-mail:
    {cle@math.upenn.edu}.
    Research partially supported by NSF grant
    DMS12-05851 and ARO grant W911NF-12-1-0552.} \,\,
 Leslie Greengard,\footnote{Courant Institute,
    New York University, 251 Mercer Street, New York, NY 10012 and
    Simons Foundation, 160 Fifth Avenue, New York, NY 10010.
    E-mail: {greengard@cims.nyu.edu}. 
    Research partially supported by the U.S. Department of Energy under
    contract DEFG0288ER25053 and by 
    by the Office of the Assistant Secretary of Defense for Research and Engineering 
    and AFOSR under NSSEFF Program Award FA9550-10-1-0180.} \\ 
and Thomas Hagstrom\footnote{Dept. of Mathematics, Southern Methodist University,
    PO Box 750156, Dallas TX 75275-0156. E-mail: {thagstrom@smu.edu}. Research
    partially supported by grants ARO W911NF-09-1-0344 and the NSF DMS-1418871.
    \newline {\bf Keywords}:
    wave equation, boundary integral equations, time dependent, scattering
    poles, exponential decay. MSC: 65M80, 31B10, 35P25, 35L20.}}  
\date{April 15, 2015}

\maketitle
\centerline{\large In honor of Peter Lax on the occasion of his 90th birthday.}
\begin{abstract}We give a principled approach for the selection of a boundary
  integral, retarded potential representation for the solution of scattering
  problems for the wave equation in an exterior domain.
\end{abstract}

\section{Introduction}
Let $D$ be a bounded domain with smooth boundary, $\Gamma,$ and set
$\Omega=\overline{D}^c.$ The determination of the outgoing part of the solution
to the classical acoustic scattering problem for a soft scatterer requires
solving the mixed Cauchy problem for the wave equation:
\begin{equation}\label{eqn1}
\begin{split}
  &\pa_t^2u=c^2\Delta u\text{ in }\Omega\times [0,\infty),\\
&\text{ with }u(0,x)=\pa_t u(0,x)=0\text{ and }\\
&u(x,t)=g(x,t)\text{ for } (x,t)\in \pa\Omega\times [0,\infty).
\end{split}
\end{equation}
Various other types of scatterers are modeled by using boundary conditions of the
form
$\beta(x)u(x,t)+\pa_{\bn}u(x,t)=g(x,t),$ for $(x,t)\in \pa\Omega\times
[0,\infty).$

Many numerical approaches to solving this problem express the solution in terms
of the retarded potentials derived from the fundamental solution to the wave
equation. By analogy with the time harmonic case, we define the single and double
layer potentials by the formulae
\begin{equation}
  \begin{split}
    \cS\mu(x,t)&=\int\limits_{\Gamma}\frac{\mu(y,t-|x-y|)}{4\pi |x-y|}dS_y\\
\cD\mu(x,t)&=\int\limits_{\Gamma}\frac{\bn_y\cdot(x-y)}{4\pi |x-y|}\left[
\frac{\mu(y,t-|x-y|)}{|x-y|^2}+\frac{\dot{\mu}(y,t-|x-y|)}{|x-y|}\right]dS_y,
  \end{split}
\end{equation}
for $x\in\Gamma^c$.  Here, $\dot{\mu}(y,s)=\pa_s\mu(y,s).$ 

Suppose now that we represent the solution in the form
\begin{equation}\label{eqn4}
  u(x,t)=\cD\mu(x,t)+\cS(a\dot{\mu}+b\mu)(x,t)\text{ for }(x,t)\in\Omega\times [0,\infty).
\end{equation}
To enforce the boundary condition on $\Gamma\times [0,\infty)$, 
we take the limit of this representation 
from the exterior of $D$, yielding the integral equation 
\begin{equation}\label{eqn3}
  \left[\frac{\mu(x,t)}{2}+N\mu(x,t)+K(a(y)\dot{\mu}+b(y)\mu)(x,t)\right]=g(x,t),
\end{equation}
for $(x,t)\in \Gamma\times [0,\infty)$. 
Here $N$ is the weakly singular principal part of the double
layer restricted to $\Gamma\times\Gamma,$ and $K$ is the single layer on
$\Gamma\times\Gamma.$ 

There is a substantial literature on time-domain integral equations for scattering
problems, which we do not seek to review here. 
(See, for example, \cite{COSTABEL,SAYAS,ERGIN,HADUONG,HADUONG2,LEVIATAN} and the references therein.) 
We simply note that, while prior work has considered the use of the single layer 
alone, the double layer alone,
as well as linear combinations of the two, there is relatively little discussion of a
principled analytic approach for selecting one representation over another.
Using classical results from scattering
theory of Lax, Morawetz, and Phillips \cite{LMP63}, we provide criteria for
choosing the coefficients $(a(y)$ and $b(y))$ above, at least when $\Gamma$ is a
non-trapping obstacle. This is a hypothesis about the behavior of reflected
light rays in the exterior of $D,$ which is always satisfied if $D$ is convex
or star-shaped. These results imply that if the disturbance $g(y,t)$ has compact
support in time, then the solution to the mixed Cauchy problem in
equation~\eqref{eqn1} will eventually exhibit local exponential decay. 

The principle that guides our selection of representation is that, ideally, the
source $\mu(y,t)$ should display the same rate of exponential decay in time as
the solution itself. If this is not the case, then the mapping from $\mu\mapsto
u(x,t)$ in~\eqref{eqn4} must, after a certain time, involve some catastrophic
cancellation, and hence can be expected to lose relative accuracy as time
progresses. This goal can be achieved exactly for round spheres. For more
general non-trapping obstacles, we show (modulo a technical hypothesis) that
certain choices of $(a,b)$ lead to sources that decay exponentially, though
perhaps not at the same rate as $u$ itself.  As is shown in Section~\ref{sec5},
in the smooth, strictly convex case, microlocal analysis indicates that the
choice $a\equiv 1$ should be be optimal.

Our analysis relies upon taking the Fourier transform in the time variable and
using recent estimates for the kernel of the CFIE done by Chandler-Wilde and
Monk, and Baskin, Spence, and Wunsch, see~\cite{BSW,ChandlerMonk}. The results
of~\cite{BSW} and our own also rely on several modern improvements to the
classical results in scattering theory alluded to above.

\section{Time-harmonic Analysis}

Recall that 
\begin{equation}
  g_k(x)=\frac{e^{ik|x|}}{4\pi |x|},
\end{equation}
is the outgoing fundamental solution for $(\Delta+k^2),$ with the corresponding
single and
double layer potentials defined by:
\begin{equation}
  S_kf(x)=\int\limits_{\pa D}g_k(x-y)f(y)dS_y;\quad
D_kf(x)=\int\limits_{\pa D}\pa_{\bn_y}g_k(x-y)f(y)dS_y.
\end{equation}
With respect to the complex linear pairing, $\langle u,v\rangle=\int_{\pa
  D}uvdS,$ the single layer $S_k$ is self-dual and
\begin{equation}
  D'_kf(x)=\int\limits_{\pa D}\pa_{\bn_x}g_k(x-y)f(y)dS_y.
\end{equation}
If we take the Fourier
transform in time:
\begin{equation}
  \hmu(x,k)=\int\limits_{0}^{\infty}\mu(x,t)e^{-itk}dt,
\end{equation}
then equation~\eqref{eqn3} becomes
\begin{equation}
   \left[\frac{\hmu(x,k)}{2}+D_k\hmu(x,k)-S_k[i a(y)k\hmu-b(y)\hmu](x,k)\right]=\hg(x,k)
\end{equation}
As before the limit is taken from the exterior of $D.$ Here and in the sequel
we assume that $g(x,t)=0$ for $t\leq 0;$ by causality $\mu(x,t)$ can also be
taken to vanish for $t\leq 0.$

The combined field operators, acting on functions on $\Gamma,$  given by
\begin{equation}
  A(a,b;k)=\frac{I}{2}+D_k-iS_k[ka(y)+ib(y)],
\end{equation}
are well known to be Fredholm operators of second kind, provided that $\Gamma$
is at least $\cC^1.$ For functions $a(y)>0$ and $b(y)> 0$ for all $y\in\Gamma,$ it
can be shown, by a simple integration by parts argument, that $A(a,b;k)$ is
invertible for $k$ in the closed upper half plane. To prove this we use that
the dual of $A(a,b;k)$ with respect to the complex linear pairing is
\begin{equation}
  A'(a,b;k)=\frac{I}{2}+D'_k-i(a(y)k+ib(y))S_k.
\end{equation}

Suppose that for some $\Im k\geq 0,$ there is a non-trivial solution to
$A(a,b;k)f=0.$ By the Fredholm theory this means that there is also a
non-trivial solution $h$ to $A'(a,b;k)h=0.$ Let
\begin{equation}
  u(x)=S_k h(x).
\end{equation}
We will have occasion to consider the 
function $u(x)$ in both the interior of $D$ and its exterior. To make 
this distinction clear, we will refer to the corresponding restriction of
$u$ by $u_-$ and $u_+$, respectively.

Observe now that $u_-\neq 0.$ If it were, then $u_+\restrictedto_{\pa D}=0$
as well. In this case $u_+$ would be an solution to
$(\Delta+k^2)u_+=0$ with vanishing Dirichlet data, satisfying the Sommerfeld
radiation condition and it would therefore also
be zero, which is impossible. Thus, when $\Im k\geq 0,$ this gives a non-trivial solution in $D$ to
\begin{equation}\label{eqn13}
  (\Delta+k^2)u_-=0\text{ with }\pa_{\bn}u_-(x)-i(ka(x)+ib(x))u_-(x)=0.
\end{equation}
The boundary condition is equivalent to the fact that $A'(a,b;k)h=0.$

Using the boundary condition, and integration by parts we obtain:
\begin{equation}\label{eqn14.1}
\begin{split}
  0&=\int\limits_{D}(\Delta
  u_-+k^2u_-)\bar{u}_-dx=\int\limits_{D}[k^2|u_-|^2-|\nabla
  u_-|^2]dx+\int\limits_{\pa D}\pa_{\bn}u_-\bar{u}_-\\
&=\int\limits_{D}[k^2|u_-|^2-|\nabla
  u_-|^2]dx+i\int\limits_{\pa D}(ka(x)+ib(x))|u_-|^2dS.
\end{split}
\end{equation}
Let $k=k_1+ik_2$, and take the real and imaginary parts to see that this
implies
\begin{equation}\label{eqn15.1}
\begin{split}
  &k_1\left[2k_2\int\limits_{D}|u_-|^2dx+\int\limits_{\pa D}a(x)|u_-|^2dS\right]=0\\
&\int\limits_{D}[(k_1^2-k_2^2)|u_-|^2-|\nabla
  u_-|^2]dx-\int\limits_{\pa D}(k_2a(x)+b(x))|u_-|^2dS=0.
\end{split}
\end{equation}
The first relation shows that if $k_1\neq 0$ and $k_2>0,$ then $u_-=0$ as
well. If $k_1=0$ and $k_2\geq 0,$ then the second equation shows that $u_-=0.$
If $k_2=0,$ but $k_1\neq 0,$ then the first relation implies that
$u_-\restrictedto_{bD}=0;$ the boundary condition in~\eqref{eqn13} then implies
that $\pa_{\bn}u_-=0$ as well.  It then follows from Green's formula that
$u_-=0.$ This proves the following basic theorem:
\begin{theorem}
  If $a(x)$ and $b(x)$ are positive functions defined on $\pa D,$ a compact
  $\cC^1$-surface, then the operators $A(a,b;k)$ and $A'(a,b;k)$ are invertible
  for $k$ with $\Im k\geq 0.$
\end{theorem}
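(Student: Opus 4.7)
The plan is to use Fredholm theory to reduce the invertibility statement to injectivity, then extract a contradiction from a nontrivial null element by analyzing the associated single-layer potential in the interior via Green's identity. Since $A(a,b;k)$ is a compact perturbation of $I/2$ on $L^2(\Gamma)$, it is Fredholm of index zero, as is $A'(a,b;k)$; invertibility is equivalent to injectivity, and the two operators are invertible together via the complex-linear pairing. It therefore suffices to rule out a nontrivial $h$ with $A'(a,b;k)h = 0$ for $\Im k \geq 0$.

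Given such an $h \neq 0$, let $u_\pm$ denote the restrictions of $u = S_k h$ to $D$ and to $\Omega$. The first step is to argue that $u_- \not\equiv 0$: if it were, then continuity of the single-layer potential across $\Gamma$ would give $u_+|_\Gamma = 0$; uniqueness of the exterior Dirichlet problem with the Sommerfeld radiation condition (valid throughout $\Im k \geq 0$) would then give $u_+ \equiv 0$, and the normal-derivative jump relation for $S_k$ would force $h = 0$, a contradiction. Next, the identity $A'(a,b;k)h = 0$, translated via the standard jump relation for $\pa_{\bn} S_k$, becomes exactly the boundary condition on $u_-$ displayed in \eqref{eqn13}. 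Pairing $(\Delta+k^2)u_- = 0$ with $\bar u_-$ and integrating by parts on $D$ yields \eqref{eqn14.1}, whose real and imaginary parts are the pair of relations in \eqref{eqn15.1}.

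The remainder is a case analysis on $k = k_1 + ik_2$ with $k_2 \geq 0$. When $k_1 \neq 0$ and $k_2 > 0$, the first relation together with $a > 0$ forces $\int_D |u_-|^2 = 0$, so $u_- \equiv 0$. When $k_1 = 0$ and $k_2 \geq 0$, the second relation together with $a,b > 0$ forces $\int_D |\nabla u_-|^2 \leq 0$ and $u_- \equiv 0$. The main obstacle is the borderline case $k_2 = 0$, $k_1 \neq 0$: here the first relation only yields $\int_\Gamma a|u_-|^2 dS = 0$, so $u_-|_\Gamma = 0$; one then uses the boundary condition \eqref{eqn13} to deduce $\pa_{\bn} u_-|_\Gamma = 0$ as well, and finally invokes Holmgren uniqueness (or Green's representation formula for the interior Helmholtz equation) to conclude $u_- \equiv 0$ in $D$. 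Each case contradicts $u_- \not\equiv 0$, completing the argument. The crux is the first step of each direction (ruling out $u_- \equiv 0$ via exterior uniqueness, and handling real $k$ via the two-stage boundary argument); the other pieces are standard energy estimates driven directly by the positivity hypotheses on $a$ and $b$.
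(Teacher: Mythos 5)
Your proposal is correct and follows essentially the same route as the paper: Fredholm reduction to injectivity, passage to the dual null element $h$, the single-layer potential $u=S_kh$ with the exterior-uniqueness argument showing $u_-\not\equiv 0$, and the same case analysis on the real and imaginary parts of the Green's identity \eqref{eqn15.1}, including the two-stage boundary argument for real nonzero $k$. The only (welcome) addition is that you spell out the jump-relation step forcing $h=0$ when $u_+\equiv 0$, which the paper leaves implicit.
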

\begin{remark} If $b=0,$ then $A(a,b;0)$ has a non-trivial nullspace. Such a
  nullspace would generically destroy any possibility for exponential decay in
  the source function $\mu(x,t).$ In Kress' early paper \cite{Kress3} on the combined field
  operator, he shows that the optimal result (for the disk) when $k$ is close to
  zero results from taking $b=-\frac i2+O(k^2)$ in our notation.
  This is somewhat at odds with our choice to take $b$ real
  and positive. 

  Careful examination shows that Kress' choice only works if $k_1\geq 0:$ In
  the set $k_2<|k_1|$ the real part of the quadratic form in~\eqref{eqn14.1} is
  indefinite (regardless of the values that $a$ and $b$ take) and so to obtain
  the desired result we need to employ the imaginary part of the quadratic
  form.  If $b=b_1+ib_2,$ then this form would become
  \begin{equation}
    k_1\left[2k_2\int_D|u_-|^2dx+a\int_{\pa D}|u_-|^2dS\right]-b_2\int_{\pa D}|u_-|^2dS.
  \end{equation}
  In order for the term in the brackets to be definite where $k_2>0,$ we need
  to take $a>0.$ In order for this expression to be definite in both components
  of the set $0< k_2<|k_1|$ it is clearly necessary to take $b_2=0.$ Of course
  one could use functions of $k$ more complicated than $ak+ib$ as the
  coefficient of $S_k,$ but this would considerably complicate the relationship
  between the solutions in the frequency  and the time domains.
\end{remark}

\section{Scattering Theory}
Let $(a(x), b(x))$ be positive functions defined on $\pa D.$ In the recent
paper of Baskin, Spence, and Wunsch, see Theorem 1.10 of~\cite{BSW}, it is shown that there
is a positive number $\beta_1$ so that the operator $(\Delta+k^2)u$ acting on
data in $D$ that satisfies the boundary condition
$(\pa_{\bn}u-i(a(x)+ib(x))u)=0,$ is invertible for $\Im k>-\beta_1.$ This
result does \emph{not} assume that $\pa D$ is non-trapping. Using this result,
along with modern refinements of theorems of Lax and Phillips and Lax, Morawetz and
Phillips \cite{LMP63,LP}, we can prove the following result:
\begin{theorem} For $n\in\bbN,$
  let $D$ be a  non-trapping region in $\bbR^{2n+1}.$ If
  $a$ and $b$ are both positive, then there is a positive number $\alpha$ so that
  $A(a,b;k)$ is invertible in $\Im k>-\alpha.$
\end{theorem}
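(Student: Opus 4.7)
The plan is to combine analytic Fredholm continuation from the closed upper half-plane (where Theorem 1 gives invertibility) with two scattering-theoretic inputs: the BSW resonance bound for the interior impedance problem, and the Lax-Phillips-Morawetz exponential decay of exterior Dirichlet scattering poles on non-trapping obstacles in odd dimensions.

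Since $2n+1$ is odd, the outgoing free-space fundamental solution $g_k$ is entire in $k$, so $S_k$ and $D_k$ form entire operator-valued holomorphic families (of smoothing, or at least compact, perturbations of the identity on $L^2(\Gamma)$), and $A(a,b;k)=\tfrac{I}{2}+D_k-iS_k[ka+ib]$ is an entire holomorphic family of Fredholm operators of index zero on $\bbC$. By Theorem 1 this family is invertible on $\{\Im k\ge 0\}$, so the analytic Fredholm theorem provides a meromorphic inverse on $\bbC$ whose pole set $P$ is discrete; it suffices to verify $P\cap\{\Im k>-\alpha\}=\emptyset$ for some $\alpha>0$.

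Fix $k_0\in P$ and, by the Fredholm alternative, pick $0\neq h_0\in\ker A'(a,b;k_0)$. Mimicking the proof of Theorem 1, set $u=S_{k_0}h_0$. Then $u$ is smooth off $\Gamma$, solves $(\Delta+k_0^2)u=0$ in $D$ and in $\Omega$, is outgoing in $\Omega$, has $u_-=u_+$ on $\Gamma$, and the identity $A'(a,b;k_0)h_0=0$ combined with the standard jump relations for the single layer yields the interior impedance boundary condition $\pa_\bn u_- - i(k_0 a+ib)u_-=0$ on $\Gamma$. If $u_-\not\equiv 0$, then $u_-$ is a nontrivial solution of the interior impedance problem with impedance $i(k_0a+ib)$, and the BSW bound forces $\Im k_0\le -\beta_1$ for some $\beta_1>0$. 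If $u_-\equiv 0$, then $u_+|_\Gamma=u_-|_\Gamma=0$, and $u_+\not\equiv 0$ in $\Omega$ (otherwise $h_0$, which is the jump of $\pa_\bn u$ across $\Gamma$, would also vanish); thus $u_+$ is a nontrivial outgoing solution of $(\Delta+k_0^2)u_+=0$ in $\Omega$ with zero Dirichlet data, i.e., $k_0$ is an exterior Dirichlet scattering pole of $D$. Under the non-trapping hypothesis in odd dimension $2n+1$, the modern refinements (Morawetz-Ralston-Strauss, Melrose-Sj\"ostrand, Vainberg) of the Lax-Phillips-Morawetz theorems confine such poles to $\{\Im k\le -\alpha_0\}$ for some $\alpha_0>0$. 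Taking $\alpha=\min(\beta_1,\alpha_0)$ completes the proof.

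The delicate step is the interior case. BSW as quoted in the excerpt controls the interior impedance resonances for the $k$-\emph{independent} impedance $i(a(x)+ib(x))$, whereas the impedance arising from $A'(a,b;k_0)$ is $i(k_0a+ib)$, whose imaginary part $\Re(k_0)\,a(x)$ grows linearly with $\Re(k_0)$. Justifying the resonance-free half-plane for this $k$-dependent impedance---either by re-examining their semiclassical commutator/Carleman estimates, which is plausible since the linear growth puts the imaginary part at the natural semiclassical size for a dissipative boundary symbol, or by quoting a sufficiently general version of their theorem---is the main technical point that must be verified. The exterior case, by contrast, is an essentially citational use of well-established scattering pole bounds.
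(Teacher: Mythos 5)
Your proof is correct and follows essentially the same route as the paper's: form $u=S_k h$ from a null vector $h$ of the adjoint operator and split into the cases $u_-\neq 0$ (a nontrivial interior impedance resonance, excluded in a strip by Baskin--Spence--Wunsch) and $u_-=0$ (a nontrivial eventually outgoing exterior Dirichlet solution, i.e.\ a scattering pole, excluded in a strip by the non-trapping results of Lax--Phillips--Morawetz). The technical point you flag about the $k$-dependence of the impedance $i(ka+ib)$ is a fair reading of the theorem as quoted, but it is an issue with the citation rather than with the argument (the BSW result is in fact formulated for the $k$-dependent impedance), so your proof matches the paper's in substance.
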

\begin{proof}
  The facts we use, in addition to the result of~\cite{BSW} are
  \begin{enumerate}
  \item If $D$ is a non-trapping region then the generator, $B,$ of the
    compressed wave-semigroup $Z(t)$ of Lax and Phillips has its spectrum,
    $\sigma(B),$ in a half plane of the form $\Im k\leq -\beta_0<0.$
  \item A number $k$ belongs to $\sigma(B)$ if and only if there is an
    ``eventually outgoing'' solution $v_+$ to $(\Delta+k^2)v_+=0$ with
    $v_+\restrictedto_{\pa D}=0.$ See Theorem V-4.1 in~\cite{LP}.
  \end{enumerate}
  By definition (see \cite{LP} pp. 126-7) a solution is eventually outgoing (even for
  $\Im k<0$) if and only if it can be represented in the form
\begin{equation}\label{eqn8}
  v_+(x)=\int\limits_{\pa D}[\pa_{\nu_y}v_+(y)g_k(x-y)-v_+(y)\pa_{\nu_y}g_k(x-y)]dy,
\end{equation}
with $\nu_y=-\bn_y,$ the outer normal to $D^c.$ See Theorem IV-4.3
in~\cite{LP}. Here $g_k(x)$ is the ``outgoing'' fundamental solution for
$\Delta+k^2$ in $\bbR^{2n+1}.$

We argue as before: if for some $k,$ with $\Im k<0,$ the operator $A(a,b;k)$
has a non-trivial null-space, then so does $A'(a,b;k).$ Let $\varphi\neq 0$ be
in this latter nullspace and set
\begin{equation}
  u=\int\limits_{\pa D}\varphi g_k(x-y)dS_y.
\end{equation}
Unlike the case where $\Im k\geq 0,$ we do not know, a priori, that $u_-\neq
0.$ Indeed, there are just the two cases to consider: $u_-=0$ or not. If $u_-\neq 0,$ then 
$u_-$ is a non-trivial solution to the boundary value problem considered
in~\cite{BSW}, and therefore $\Im k\leq -\beta_1.$ 

The other possibility is that $u_-=0.$ This implies that $u_+\restrictedto_{\pa
  D}=0$ as well, since $u$ is continuous across $\pa D.$ However $u_+$ need not
be zero, since it is exponentially growing at infinity. Indeed, if $\varphi\neq
0,$ and $u_-=0,$ then the jump conditions for $\pa_{\bn}u_{\pm}$ show that
$u_+$ cannot be zero. Because $u_+\restrictedto_{\pa D}=0,$ and
$\pa_{\bn}u_+=-\varphi,$ it follows that $u_+$ satisfies~\eqref{eqn8} and therefore
that $u_+$ is an eventually outgoing solution in the sense of Lax and Phillips.
Hence $k$ must belong to the spectrum of $B,$ and therefore $\Im k\leq
-\beta_0.$ This shows that if $A(a,b;k)$ is non-invertible, then $\Im k\leq
\max\{-\beta_1,-\beta_0\}=-\alpha$.
\end{proof}

The converse is also true.
\begin{theorem}
 If $k$ is a scattering pole for the Dirichlet Laplacian, then
$A(a,b;k)$ has a non-trivial null-space.
\end{theorem}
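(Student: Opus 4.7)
The plan is to mirror the preceding theorem in reverse: from the scattering pole I will produce a nontrivial density $\varphi$ on $\pa D$ lying in $\ker A'(a,b;k)$, and then invoke Fredholm duality to transfer this to $\ker A(a,b;k)$.

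Fact (2) used in the previous proof (Theorem V-4.1 of \cite{LP}) says that $k$ is a scattering pole of the Dirichlet Laplacian precisely when there is a nontrivial eventually outgoing $v_+$ with $(\Delta+k^2)v_+=0$ in $\Omega$ and $v_+\restrictedto_{\pa D}=0$. The representation \eqref{eqn8}, with $v_+|_{\pa D}=0$ and $\nu_y=-\bn_y$, then collapses to
\begin{equation*}
  v_+(x) \;=\; -\int_{\pa D}(\pa_{\bn_y}v_+)(y)\,g_k(x-y)\,dS_y \;=\; S_k\varphi(x),\quad x\in\Omega,
\end{equation*}
where $\varphi:=-\pa_{\bn}v_+$. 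Unique continuation for $\Delta+k^2$ forces $\varphi\not\equiv 0$. I would then extend $u:=S_k\varphi$ to all of $\bbR^{2n+1}$ and study $u_-$. Continuity of the single layer across $\pa D$ gives $u_-|_{\pa D}=u_+|_{\pa D}=v_+|_{\pa D}=0$, so $u_-$ solves a homogeneous interior Dirichlet problem for $\Delta+k^2$ on $D$. Because every scattering pole lies in the open lower half-plane $\Im k<0$, the number $k^2$ is not a positive real and in particular is not a Dirichlet eigenvalue of $-\Delta$ on $D$; hence $u_-\equiv 0$.

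From $u_-\equiv 0$ I read off $\pa_{\bn}u_-|_{\pa D}=0$, and the interior jump relation $\pa_{\bn}(S_k\varphi)|_-=\tfrac{1}{2}\varphi+D'_k\varphi$ then yields $\tfrac{1}{2}\varphi+D'_k\varphi=0$. Combined with the already-established $S_k\varphi|_{\pa D}=0$, the definition of $A'$ produces
\begin{equation*}
  A'(a,b;k)\varphi \;=\; \tfrac{1}{2}\varphi+D'_k\varphi - i(ak+ib)S_k\varphi \;=\; 0.
\end{equation*}
Since $A$ and $A'$ are Fredholm of second kind (hence of index zero) and dual under the complex linear pairing $\langle u,v\rangle=\int_{\pa D}uv\,dS$, it follows that $\dim\ker A(a,b;k)=\dim\ker A'(a,b;k)\ge 1$, as required.

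The one delicate point is the justification that $u_-\equiv 0$: if $k^2$ happened to be an interior Dirichlet eigenvalue, the interior problem would determine $u_-$ only modulo an eigenfunction, and one would have to subtract a suitable correction before reading off the interior Neumann trace. This obstruction does not occur, because scattering poles live strictly in $\Im k<0$ while interior Dirichlet eigenvalues have $k^2>0$, so the two sets are disjoint. With that observation, the proof is a direct transcription, in reverse, of the second case of the preceding argument.
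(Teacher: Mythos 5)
Your proof is correct and follows essentially the same route as the paper: represent the outgoing resonant state as a single layer $S_k\varphi$ with $\varphi=\mp\pa_{\bn}v_+$, deduce $S_k\varphi|_{\pa D}=0$ and $\tfrac{1}{2}\varphi+D_k'\varphi=0$, hence $A'(a,b;k)\varphi=0$, and pass to $\ker A(a,b;k)$ by Fredholm duality. The only difference is your detour through the interior solution $u_-$ (correct, since $\Im k<0$ rules out $k^2$ being a Dirichlet eigenvalue, but unnecessary): the paper reads off $\tfrac{1}{2}\varphi+D_k'\varphi=\text{0}$ directly from the exterior trace of the representation formula via the one-sided jump relation.
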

\begin{proof}
 The theorem of Lax and Phillips states
that $k$ is a scattering pole if and only there is an outgoing solution to the
BVP
\begin{equation}
  (\Delta+k^2)u=0\text{ in }D^c\text{ and }u\restrictedto_{bD}=0;
\end{equation}
the outgoing condition in this case is equivalent to
\begin{equation}
  u(x)=-\int\limits_{bD}g_k(x-y)\pa_{\bn}u(y)dS_y\text{ for }x\in D^c.
\end{equation}
If we let $\varphi=\pa_{\bn}u,$ then $S_k\varphi$ vanishes on $bD$ and
\begin{equation}
  \frac{\varphi}{2}-D_k'\varphi=\varphi,
\end{equation}
from which it is immediate that
\begin{equation}
  A'(a,b;k)\varphi=0,
\end{equation}
which proves the claim. Thus every scattering resonance occurs among the set
$\{k:\:\Ker A(a,b;k)\neq 0\},$ for any  choice of positive $a$ and $b.$ 
\end{proof}

\begin{remark} These arguments apply generally to identify the frequencies for
  which the null-space of
  any combination of single and double layer potentials is non-trivial. The
  only cases not explicitly covered are those of the single and double layers
  alone. One easily establishes that $S_k$ fails to be invertible for $k^2$ in
  the Dirichlet spectrum of $D$ union with the scattering poles of the
  Dirichlet operator in $\Omega.$ The (exterior) double layer $\Id/2+D_k$ fails
  to be invertible for $k^2$ in the Neumann spectrum of $D$ union with the scattering poles of the
  Dirichlet operator in $\Omega.$ 
\end{remark}

Spence et al. show that that the resolvent kernel for the interior impedance
problem satisfies the norm estimate $\|R_{a,b}(k^2)\|\leq \frac{C}{1+|k|}$ for
$k$ in a strip around the real axis. It is not clear what the analogous bound
is for the analytic continuation of the exterior Dirichlet problem
$\|R_{\Dir}(k^2)\|$ into the lower half plane. This operator maps compactly
supported data into functions that have exponential growth at infinity. For
later applications we will need to determine bounds on $\|A(a,b;k)^{-1}\|$ for
$k$ in the lower half plane. In particular it will be important to establish a
bound of the form $\|A(a,b;k)^{-1}\|\leq M(1+|k|)^m.$

Suppose that $g(x,t)$ is data for the wave equation on the boundary of $D$ with
support for $t\in [0,T],$ and let
\begin{equation}
  \hg(x,k)=\int\limits_{0}^{\infty}g(x,k)e^{tk}dt.
\end{equation}
The source term on the boundary as a function of $t$ will be given by the
inverse Fourier transform
\begin{equation}
  \mu(x,t)=\frac{1}{2\pi}\int\limits_{-\infty}^{\infty}e^{t(ik)}[A(a,b;k)]^{-1}\hg(x,ik)dk.
\end{equation}
If this is the case, then the corresponding time-domain
representation of a solution to the wave-equation is 
\begin{equation}
 u(x,t)= \cD_t\mu+\cS_t(a\pa_t+b)\mu,
\end{equation}
with $\cS_t$ and $\cD_t$ the retarded potential single and double layers for
the wave equation. 

Assume  that for any $\epsilon>0$ and $\Im
k>-(\alpha-\epsilon),$ we have an estimate like 
$$\|[A(a,b;k)]^{-1}\|\leq
C_{\epsilon}(1+|k|)^m.$$ 
For $g$ with sufficient smoothness in $t$ we can  deform the contour
to conclude that, for $\sigma<\alpha,$ we have the representation:
\begin{equation}
  \mu(x,t)=\frac{1}{2\pi}\int\limits_{-\infty}^{\infty}e^{t(ik-\sigma)}
[A(a,b;k-i\sigma)]^{-1}\hg(x,ik-\sigma)dk,
\end{equation}
and therefore
\begin{equation}
  \|\mu(\cdot,t)\|\leq Ce^{-\sigma t}\|g\|_{m,T}.
\end{equation}
For conveniently defined norms, this is essentially what we wanted to prove. 

Of course this may not give the ``optimal'' rate of decay. It should be noted
that the rate of decay $\alpha$ is directly related to two spectral invariants,
one being the genuine scattering resonances, and the other the spectrum of
the BVP for $\Delta$ on $L^2(D)$ with impedance BC
\begin{equation}
  \pa_{\bn}u-i(ak+ib)u=0\text{ on }\pa D.
\end{equation}
At least near the negative imaginary axis, we can  push this down into
the lower half plane so that the first eigenvalue encountered is a genuine
scattering resonance.  This follows from a careful examination of the 
integration by parts argument given above. 

Let $a$ and $b$ be positive constants. We assume that $(\Delta u+k^2 u)=0,$ and
\begin{equation}
  \pa_{\bn}u-i(ak+ib)u\restrictedto_{\pa D}=0.
\end{equation}
If we integrate by parts we see that, with $k=k_1+ik_2:$
\begin{equation}\label{eqn18}
\begin{split}
 0 &=\int\limits_{D}((\Delta u+k^2 u)\bar{u}dx\\
&=-\int\limits_{D}[|\nabla
u|^2+(k_2^2-k_1^2)|u|^2]dx-(ak_2+b)\int\limits_{\pa D}|u|^2dS+\\
&ik_1\left(2k_2\int\limits_{D}|u|^2dx+a\int\limits_{\pa D}|u|^2dS\right).
\end{split}
\end{equation}
If $k_1\neq 0,$ the we can substitute from the imaginary part the fact that
\begin{equation}
 \int\limits_{\pa D}|u|^2dS=- \frac{2k_2}{a}\int\limits_{D}|u|^2dx
\end{equation}
into the real part to see that
\begin{equation}
 0= \int\limits_{D}|\nabla u|^2dx-\left[\frac{a(k_1^2+k_2^2)+2bk_2}{a}\right]\int\limits_{D}|u|^2dx.
\end{equation}
Evidently the quantity in the brackets must be positive, which implies that
\begin{equation}
k_1^2+\left(k_2+\frac{b}{a}\right)^2>\left(\frac{b}{a}\right)^2.  
\end{equation}
Thus the impedance boundary value problem has a pole free region lying in the
disk minus the imaginary axis
\begin{equation}
k_1^2+\left(k_2+\frac{b}{a}\right)^2\leq
\left(\frac{b}{a}\right)^2\setminus\{k_1 = 0\}.  
\end{equation}
Along the imaginary axis its clear that $k_2<-b/a$ is needed for~\eqref{eqn18}
to hold. A somewhat better result can be obtained by using the fact
that there is an estimate of the form
\begin{equation}
  \int\limits_{\pa D}|u|^2dS\leq C_1\left[\int\limits_{D}[|\nabla
u|^2+|u|^2]dx\right],
\end{equation}
which implies that, if $k_1=0,$ then
\begin{equation}
  k_2\leq\max\{\tau:1+(a\tau+b)C_1\leq 0 \text{ and }\tau^2+C_1(a\tau+b)\leq 0\}.
\end{equation}
Taking $b/a$ large, we can arrange to have the pole-free region encompass as much of the lower half plane as we want.

\section{The Round Sphere}
In this section we consider the case that $\Gamma$ is equal to the unit sphere
in $\bbR^3.$ We take $a$ to be a constant $\alpha$ and $b$ a constant $\beta.$
Since everything commutes with the action of the rotation group, we can analyze
this problem one spherical harmonic subspace at a time.

\subsection{Integral Equations on Spherical Harmonic Subspaces}

We need to analyze the action of the operator
\begin{multline}\label{eqn36}
  G^{\alpha,\beta}\mu(x,t)=\frac{\mu(x,t)}{2}+\frac{1}{4\pi}\int\limits_{\Gamma}\frac{n_y\cdot(x-y)}{|x-y|}\left(
\frac{\mu(y,\tau)}{|x-y|^2}+\frac{\pa_t\mu(y,\tau)}{|x-y|}\right)dS_y+\\
\frac{1}{4\pi}\int\limits_{\Gamma}\frac{\alpha\pa_t\mu(y,\tau)+\beta\mu(y,\tau)}{|x-y|}dS_y,
\end{multline}
where $\tau=t-|x-y|,$ on data of the form $\mu(y,t)=Y^m_n(y)f(t).$ Notationally
it's easier to study the general class of operators of the form:
\begin{equation}
  K(Y^m_nf)(x,t)=\frac{1}{4\pi}\int\limits_{\Gamma}Y^m_n(y)f(t-|x-y|)k(x\cdot y)dS_y.
\end{equation}
Up to taking time derivatives of $f,$ all our operators take this form. 

The key observation is that if we let $x^{\bot}$ be a unit vector orthogonal to
$x,$ then we can define a coordinate system $(\theta,\phi)$ on the unit sphere
by setting
\begin{equation}
  y=\cos\theta x+\sin\theta R_{\phi}x^{\bot}.
\end{equation}
Here, $R_{\phi}$ is a rotation through angle $\phi$ about $x.$ Substituting, we
see that
\begin{multline}
  K(Y^m_nf)(x,t)=\frac{1}{4\pi}\int\limits_{0}^{\pi}\int\limits_{0}^{2\pi}Y^m_n(\cos\theta
  x+\sin\theta R_{\phi}x^{\bot})\\
f(t-\sqrt{2(1-\cos\theta)})k(\cos\theta)\sin\theta d\phi d\theta.
\end{multline}
A simple calculation shows that
\begin{equation}
  \frac{1}{2\pi}\int\limits_{0}^{2\pi}Y^m_n(\cos\theta
  x+\sin\theta R_{\phi}x^{\bot})d\phi= Y^m_n(x)P_n(\cos\theta).
\end{equation}
Here $P_n(z)$ is the degree $n$ Legendre polynomials normalized so that $P_n(\pm 1)=(\pm 1)^n.$ Therefore we have that
\begin{equation}
  K(Y^m_nf)(x,t)=\frac{Y^m_n(x)}{2}\int\limits_{0}^{\pi}P_n(\cos\theta)f(t-\sqrt{2(1-\cos\theta)})
k(\cos\theta)\sin\theta d\theta.
\end{equation}
Let $s=\sqrt{2(1-\cos\theta)},$ and $\tk(s)=\sqrt{2(1-s)}k(s),$ to obtain:
\begin{equation}
   K(Y^m_nf)(x,t)=\frac{Y^m_n(x)}{2}\int\limits_{0}^{2}P_n(1-s^2/2)f(t-s)
\tk(1-s^2/2) ds.
\end{equation}

To represent $G^{\alpha,\beta}$ acting on data of this type we need to determine what $\tk$ is
for each of the terms in~\eqref{eqn36}.  The kernels are
\begin{equation}
  \begin{split}
    \frac{n_y\cdot (x-y)}{|x-y|^3}=\frac{x\cdot y-1}{[2(1-x\cdot y)]^{\frac
        32}}&\leftrightarrow \tk(s)=\frac{-1}{2}\\
 \frac{n_y\cdot (x-y)}{|x-y|^2}=\frac{x\cdot y-1}{[2(1-x\cdot
   y)]}&\leftrightarrow \tk(s)=-\frac{\sqrt{2(1-s)}}{2}\\
\frac{1}{|x-y|}=\frac{1}{\sqrt{2(1-x\cdot
   y)}}&\leftrightarrow \tk(s)=1
  \end{split}
\end{equation}
Acting on $Y^m_nf$ we get
\begin{multline}
  G^{\alpha,\beta}(Y^m_nf)(x,t)=\\
\frac{Y^m_n(x)}{2}\Bigg[f+\frac{1}{2}
\int\limits_0^2P_n(1-s^2/2)\large\{f(t-s)\left(-1\right)+\\
\pa_sf(t-s)\left(s\right)-2\alpha\pa_sf(t-s)+2\beta
f(t-s)\large\}ds\Bigg].
\end{multline}
Note that we have replaced $\pa_t$ with $-\pa_s.$ Integrating by parts with respect to $s$ gives
\begin{multline}
  G^{\alpha,\beta}(Y^m_nf)(x,t)=
Y^m_n(x)\Bigg[\frac{(1+\alpha)}{2}f(t)+(-1)^n\frac{(1-\alpha)}{2}f(t-2)-\\\frac{1}{4}\int\limits_0^2
\left[(2-2\beta)P_n(1-s^2/2)-s(s-2\alpha)P'_n(1-s^2/2)]\right]f(t-s)ds\Bigg].
\end{multline}
Denote the 1-d  operator in the brackets by $G_{n}^{\alpha,\beta}f.$
If we let $\alpha=\beta=1,$ we get a very simple integral equation of the second
kind to solve for $f:$
\begin{equation}
  G_n^{1,1}f_n^m(t)=f_n^m(t)+\frac{1}{4}\int\limits_0^2
s(s-2)P'_n(1-s^2/2))f_n^m(t-s)ds=g_n^m(t).
\end{equation}
If $n=0,$ then this is simply $f_0^0(t)=g_0^0(t).$

\subsection{The Roles of $\alpha$ and $\beta$}
The equation 
\begin{equation}
  G_0^{\alpha,1}f(t)=\frac{(1+\alpha)}{2}f(t)+\frac{(1-\alpha)}{2}f(t-2)=g(t),
\end{equation}
is quite informative as regards the rate of decay and regularity of the
solution. Assuming that $0<\alpha<1,$ we simplify this equation to obtain
\begin{equation}
  f(t)+\frac{1-\alpha}{1+\alpha}f(t-2)=\frac{2}{1+\alpha}g(t).
\end{equation}
If $g(t)=0$ for $t<0,$ then we can let $f(t)=0,$ for $t<0$ as well. The solution to this equation is formally given
by the infinite series
\begin{equation}
  f(t)=\frac{2}{1+\alpha}\sum_{j=0}^{\infty}(-\lambda)^jg(t-2j),\text{ with }\lambda=\frac{1-\alpha}{1+\alpha}.
\end{equation}
For data supported in $[0,\infty)$  this sum is finite for any $t.$ In particular:
\begin{equation}
  f(t)=\frac{2}{1+\alpha}g(t)\text{ for }t\in (-\infty,2).
\end{equation}

If $g$ is supported in $[0,2M]$ then the sum can be made more explicit:
\begin{equation}
  f(t)=\frac{2}{1+\alpha}\sum_{j=\left[\frac{t}{2}-M\right]\wedge 0}^{\left[\frac{t}{2}\right]}(-\lambda)^jg(t-2j).
\end{equation}
Eventually $f$ satisfies an estimate of the form
\begin{equation}
  |f(t)|\leq C_{M,\lambda}\left(\frac{1-\alpha}{1+\alpha}\right)^{\frac{t}{2}}\|g\|_{L^{\infty}}.
\end{equation}
This shows how the choice of $\alpha$ affects the decay of the source term on
the boundary. Furthermore, if $g$ is smooth but its support does not lie in an
interval of the form $[2l, 2(l+1)],$ then the solution $f$ typically has a jump
discontinuity at every positive even integer.

The $n=0, \beta=0$ case can be solved by using Laplace transform:
\begin{equation}
  \cL g(\tau)=\gamma^{\alpha}_0(\tau)\cL f(\tau),
\end{equation}
where
\begin{equation}
  \gamma^{\alpha}_0(\tau)=
\frac{1}{2}\left[1+e^{-2s}+\left(\alpha-\frac{1}{s}\right)(1-e^{-2s})\right].
\end{equation}
Note, however, that no matter what value $\alpha$ takes, this multiplier has a
root at $s=0.$ This explains why we need to take $\beta>0$ in order to get
exponential decay.

\subsection{Numerical Illustrations}

To provide concrete examples of the implications of our analysis, we have carried out
representative numerical simulations in the case $n=0$. We consider two choices
for the Dirichlet data:
\begin{eqnarray*}
g(t)=8 \sin{(50t)} \cdot e^{-40(t-1)^2} & & {\rm Oscillatory}, \\
g(t)=8 e^{-40(t-1)^2} & & {\rm Non-oscillatory} .
\end{eqnarray*}
We also consider three choices for the parameters in the integral equation:
\begin{eqnarray*}
a=\alpha=0, & & b=\beta=0 . \\
a=\alpha=1, & & b=\beta=0 . \\
a=\alpha=1, & & b=\beta=\frac {1}{2} .
\end{eqnarray*}
(Note that the optimal choice for the sphere, $a=b=1$, leads to a trivial equation when $n=0$ so
we avoid it.) 

Our numerical method is based on the standard Adams predictor-corrector idea. Recall that
for this special case the equation to be solved is
\begin{equation}
\mu(t) + \left( \frac{1-\alpha}{1+\alpha} \right) \mu(t-2) -
\left( \frac{1-\beta}{1+\alpha} \right) \int_{t-2}^t \mu(s)ds = \frac {2}{1+\alpha} g(t) . \label{mode0eq}
\end{equation}
Introducing a time step $\Delta t$ we rewrite in correction form:
\begin{eqnarray}
\mu(t+\Delta t) & = & \mu(t) - \left( \frac{1-\alpha}{1+\alpha} \right) 
\left(\mu(t+\Delta t-2)-\mu(t-2) \right) \nonumber \\ & &
+ \left( \frac{1-\beta}{1+\alpha} \right) \int_{t}^{t+\Delta t} \mu (s)ds
- \left( \frac{1-\beta}{1+\alpha} \right) \int_{t-2}^{t+\Delta t-2} \mu (s)ds \nonumber \\ 
& & + \frac {2}{1+\alpha} \left( g(t+\Delta t) -g(t) \right) . \label{CorrEq} 
\end{eqnarray} 

In the examples which follow we interpolate past solution data on an interval
$(j \Delta t, (j+1)\Delta t)$ by the Lagrange polynomial of degree $5$ using
the approximate solution at $t=k \Delta t$, $k=j-3, \ldots , j+2$. Precisely the
interpolant is used to calculate $\mu(t-2)$, $\mu(t+\Delta t-2)$, $\int_{t-2}^{t+\Delta t-2} 
\mu(s) ds$ appearing on the right hand side of (\ref{CorrEq}). The future
integral uses the $6$th order Adams-Bashforth-Moulton predictor-corrector
method. That is, we compute a predicted value $\mu^{(p)}(t+\Delta t)$ using the degree $5$
Lagrange interpolant of $\mu(t-k \Delta t)$, $k=0, \ldots ,5$ to approximate $\int_{t}^{t+\Delta t} \mu(s)ds$.
We then compute $\mu(t+\Delta t)$ using the interpolant of $\mu^{(p)}(t+ \Delta t)$ and $\mu(t-k\Delta t)$,
$k=0, \ldots ,4$ inside the integral. We have verified that convergence
at $6$th order is generally obtained (in some experiments the observed rate was reduced to $5$ when
$\alpha=\beta=0$) and we have also cross-checked the results with those
computed using $2$nd and $4$th order solvers constructed the same way. The solutions displayed in the
graphs were calculated with a time step $\Delta t=97/6400$ for the non-oscillatory data and
$\Delta t=97/12800$ for the oscillatory data. Comparisons with a coarsened computation suggest
that the solutions are accurate to at least $7$ digits in the non-oscillatory case and $4$-$6$
digits when the data was oscillatory. Here the maximum errors in the oscillatory case
are two orders of magnitude larger
when $\alpha=\beta=0$; looking at the solutions below we attribute the difference to dispersion
effects. 

Solutions of the integral equation
for the choice $\alpha=\beta=0$ are displayed in Figure \ref{FigOa0b0}.
We see that in
the case where the data is oscillatory $\mu$ does not decay but oscillates at high
frequency. When the data is non-oscillatory we observe linear growth. In each case we note that
the solution of the wave equation
itself decays exponentially in time, so, as our analysis shows, the long time behavior of the density
$\mu(t)$ is quite different than that of the function it represents. 

\begin{figure}[htb]
\begin{center}
\includegraphics[width=0.45\textwidth]{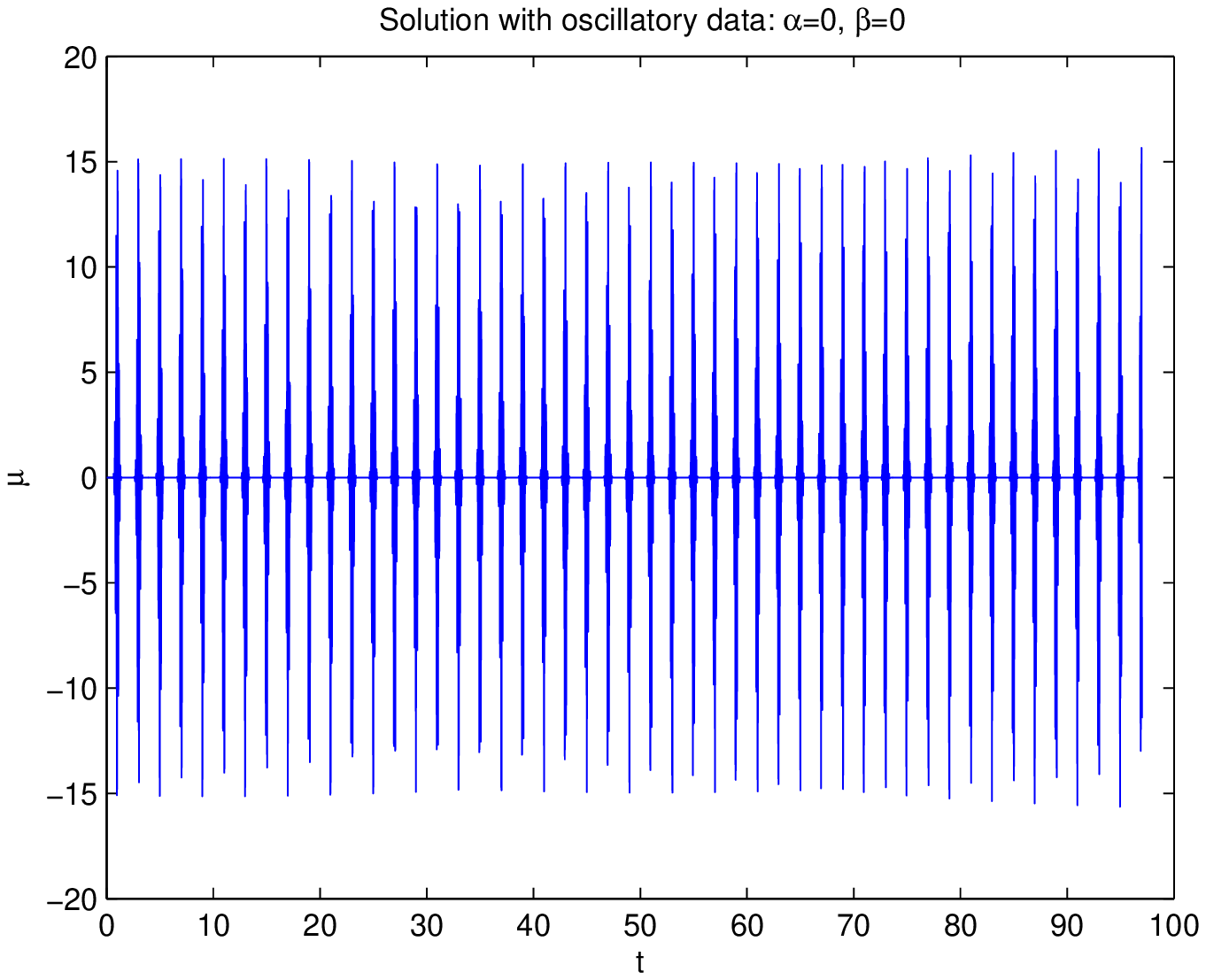}
\includegraphics[width=0.45\textwidth]{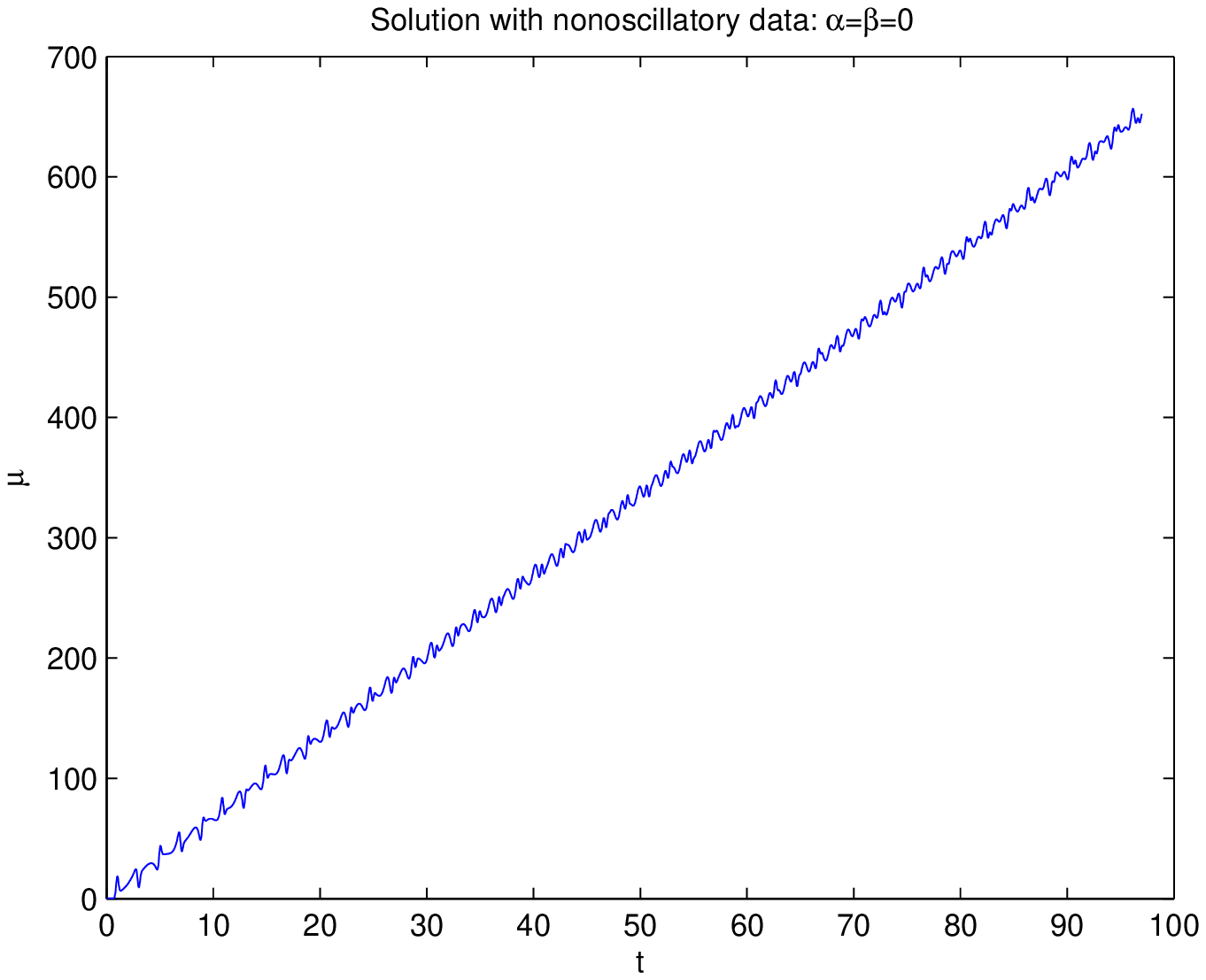}
\caption{Density $\mu(t)$ at mode $0$ with oscillatory data 
(left) and non-oscillatory data (right) for $\alpha=\beta=0$.\label{FigOa0b0}}
\end{center}
\end{figure}

Solutions of the integral equation
for the choice $\alpha=1$, $\beta=0$ are displayed in Figure \ref{FigOa1b0}. We see that in
the case where the data is oscillatory $\mu$ apparently decays in time. 
However a closer look indicates that it in fact approaches a very small steady state. 
When the data is non-oscillatory $\mu$ clearly approaches an $O(1)$ steady state,
again in contrast with the solution of the wave equation itself. This behavior corresponds to the
pole at the origin discussed above. 

\begin{figure}[htb]
\begin{center}
\includegraphics[width=0.45\textwidth]{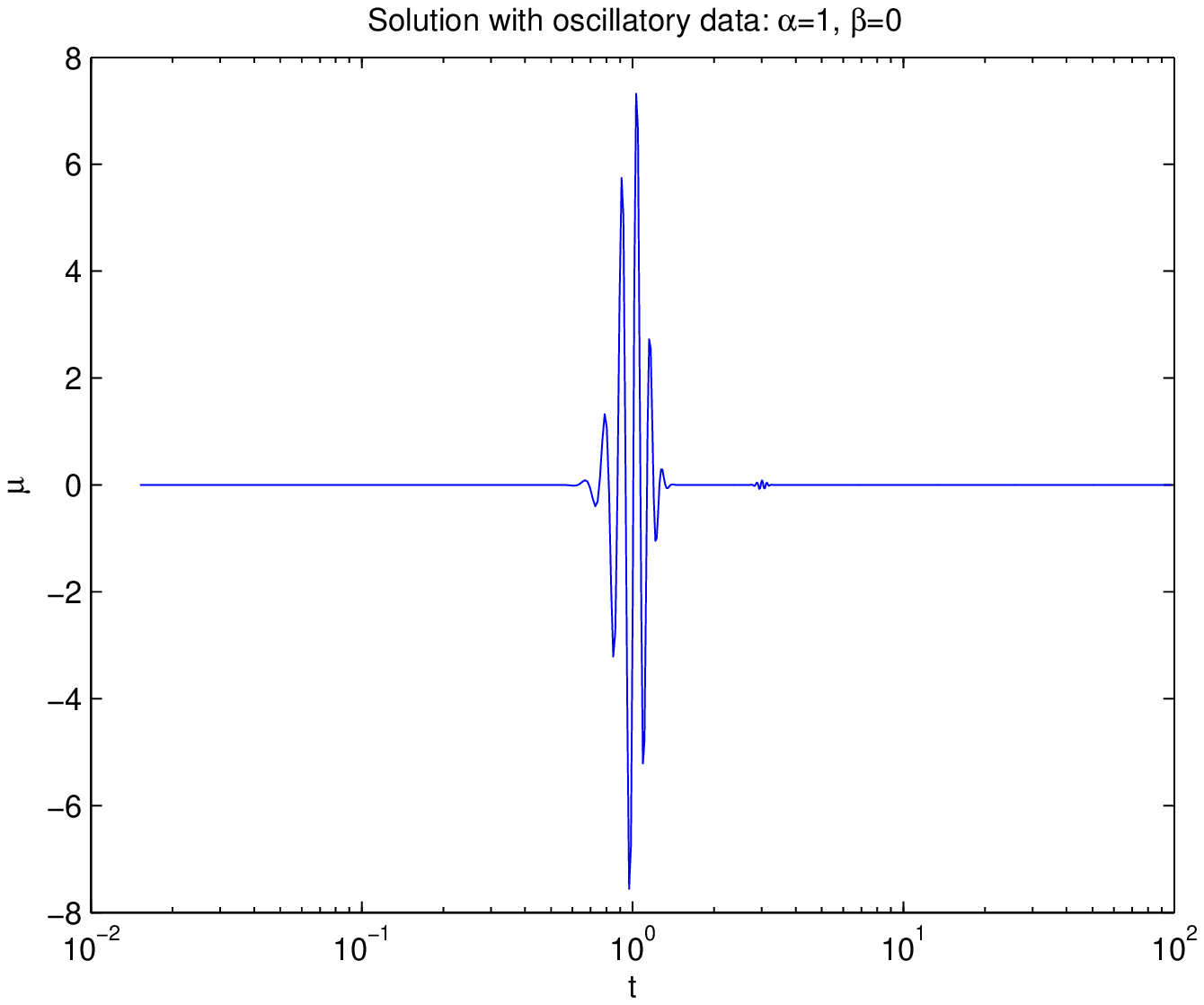}
\includegraphics[width=0.45\textwidth]{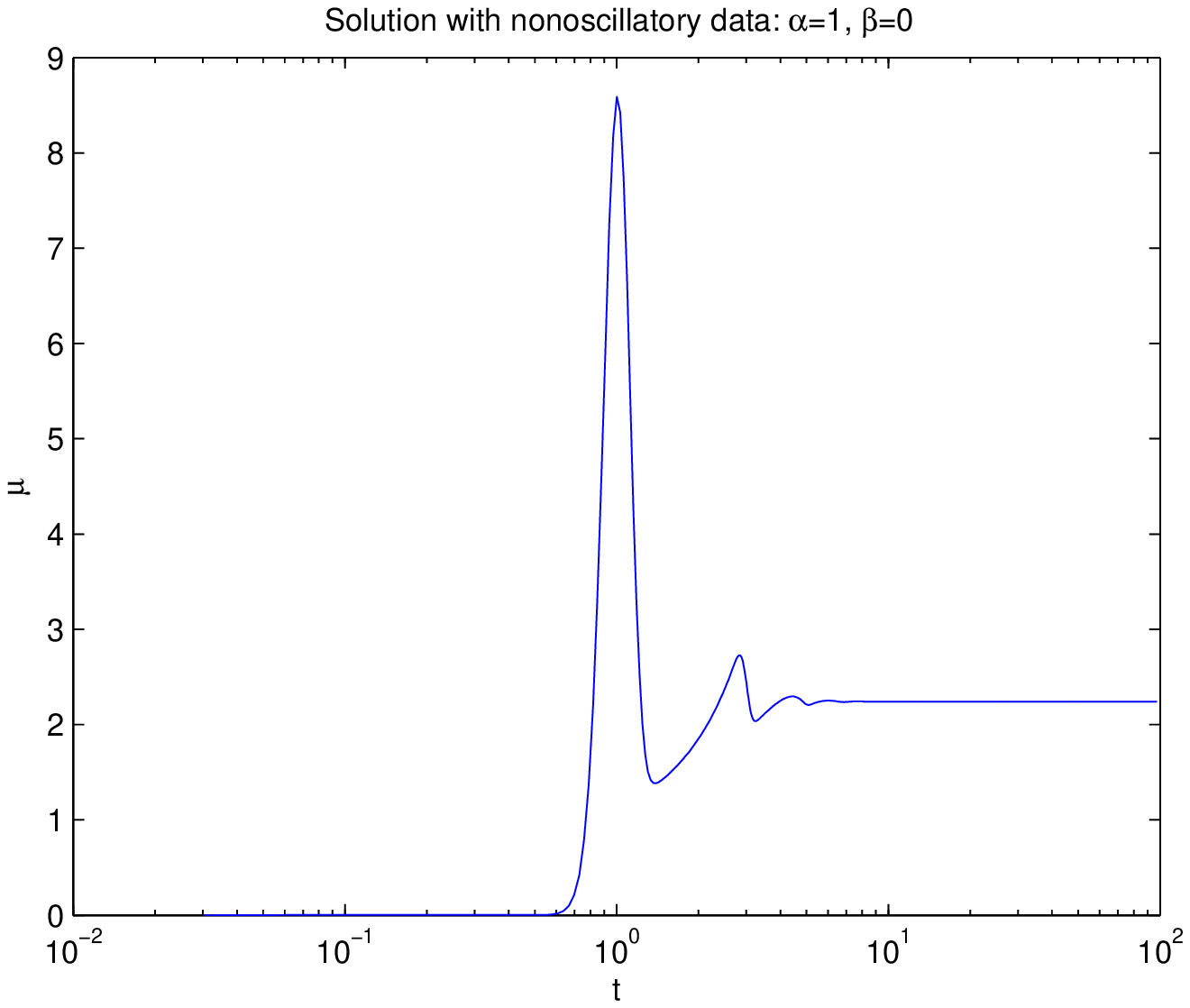}
\caption{Density $\mu(t)$ at mode $0$ with oscillatory data (left) and
non-oscillatory (right) for $\alpha=1$, $\beta=0$.\label{FigOa1b0}}
\end{center}
\end{figure}

Lastly we consider the solutions which arise when $\alpha=1$, $\beta=\frac {1}{2}$;
these are displayed in Figure \ref{FigOa1bh}. Here in each case we have exponential
decay of $\mu$ in time.

\begin{figure}[htb]
\begin{center}
\includegraphics[width=0.45\textwidth]{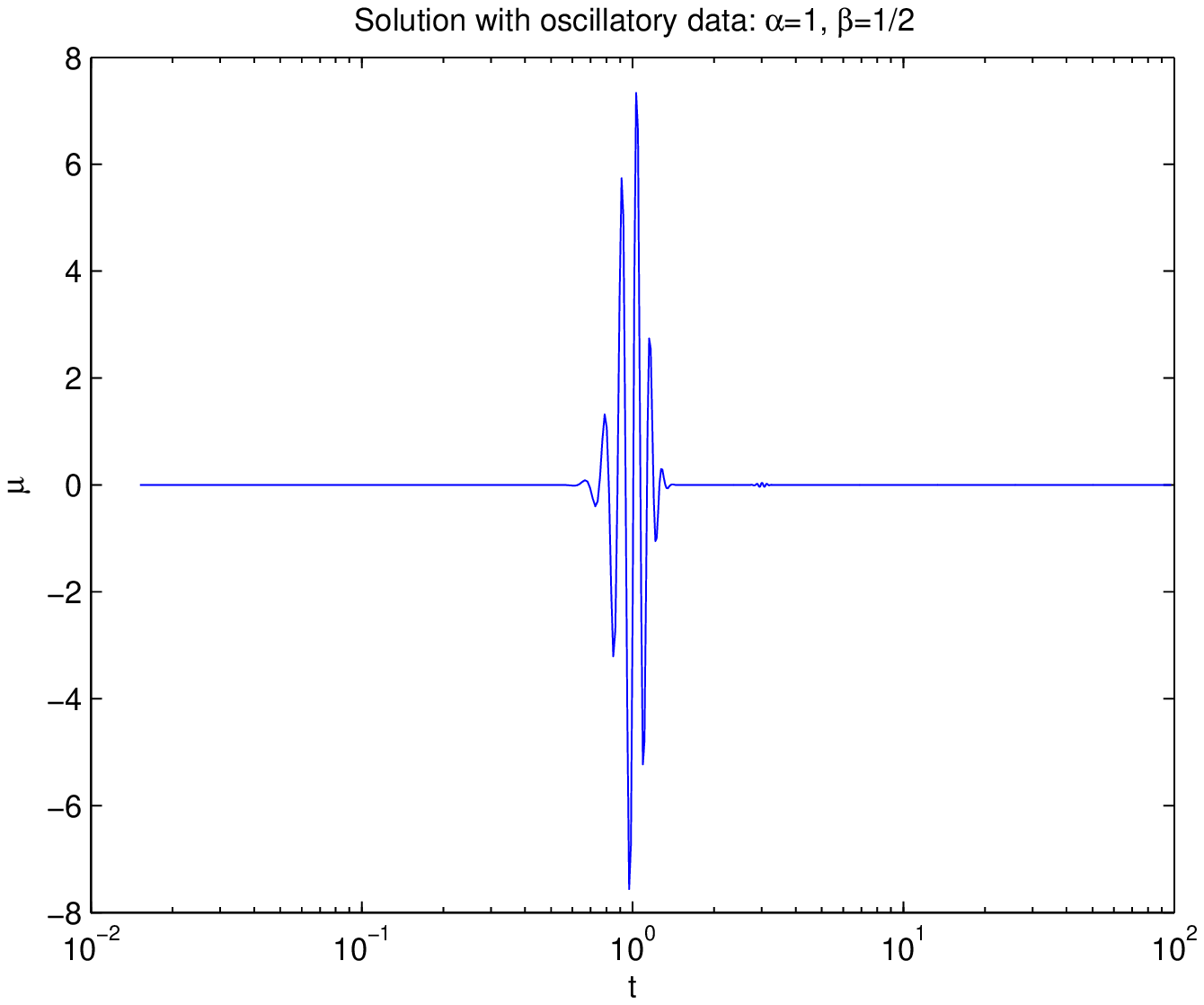}
\includegraphics[width=0.45\textwidth]{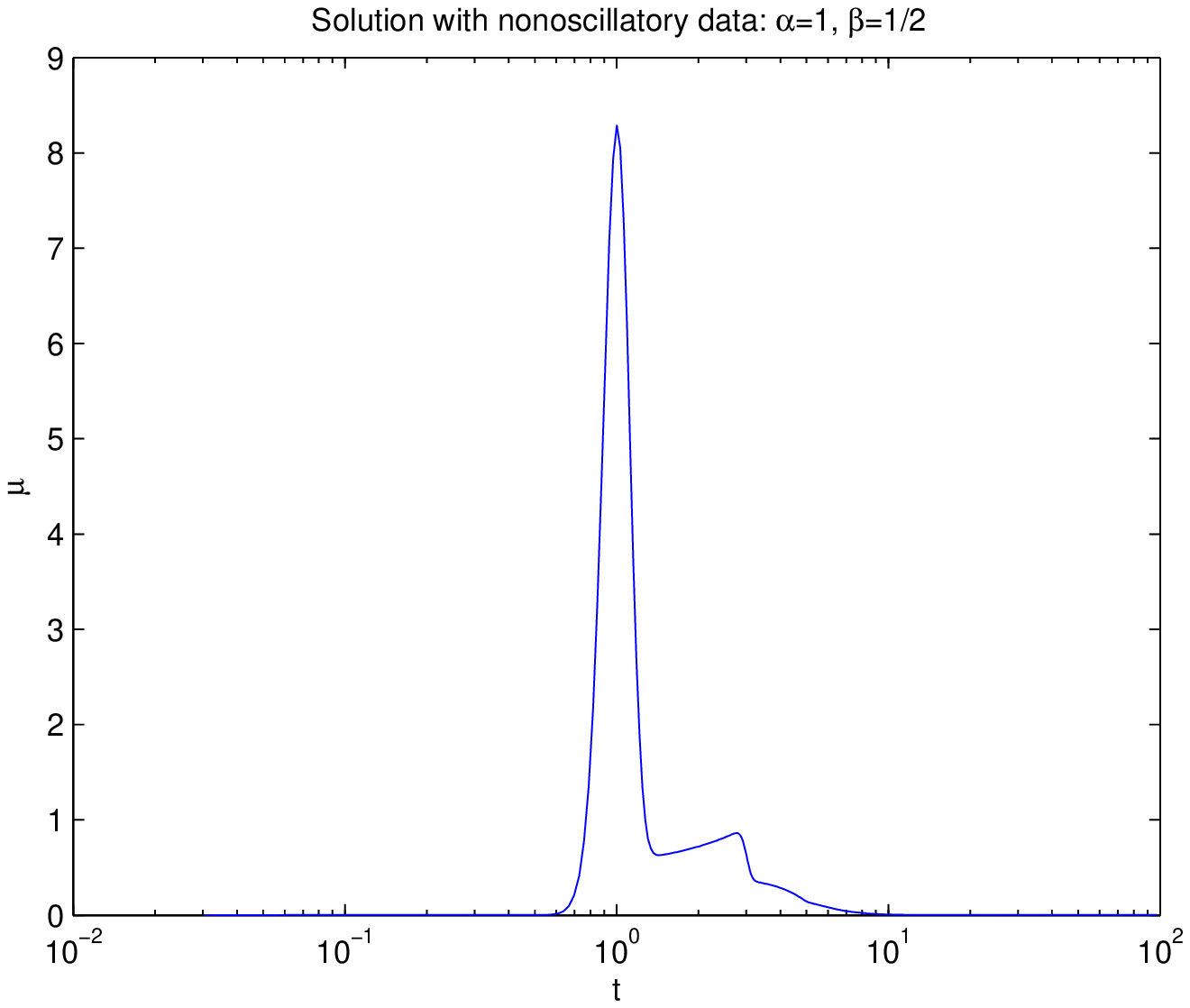}
\caption{Density $\mu(t)$ at mode $0$ with oscillatory data (left) 
and non-oscillatory data (right) for $\alpha=1$, $\beta=\frac {1}{2}$.\label{FigOa1bh}}
\end{center}
\end{figure}

\section{High Frequency Asymptotics for Convex Bodies}\label{sec5}
In the previous section we saw that, at least for round spheres, a principal
determinant of the decay rate for the solution of the integral equations on the
boundary, $G^{\alpha,\beta}_nf=g,$ is the delay term $\frac
12[(1+\alpha)f(t)+(-1)^n(1-\alpha)f(t-2)].$ Choosing $\alpha=1$ removes the
sharp contribution from the antipodal point, and gives the optimal rate of
decay available for this case. In this section we study the high frequency
behavior of $D_k-i(ka+ib)S_k$ for a smooth surface $\Gamma.$ In general one
cannot expect to have the sort of exact cancellation attained for a round
sphere, however we demonstrate the rather remarkable fact that for convex
bodies, the leading order part of the delay term is optimally canceled by
taking $a=1.$ It is less clear how to choose $b,$ but this is related to
insuring invertibility of $A(a,b;k)$ at zero frequency.

We begin by recalling the formul{\ae} for the relevant kernels:
\begin{equation}
  \begin{split}
    S_kf(x)&=\int\limits_{\Gamma}\frac{e^{ik|x-y|}}{4\pi|x-y|}f(y)dS_y\\
D_kf(x)&=\int\limits_{\Gamma}\frac{e^{ik|x-y|}\langle (x-y),\bn_y\rangle}{4\pi|x-y|^2}\left[\frac{1}{|x-y|}-ik\right]f(y)dS_y.
  \end{split}
\end{equation}
As is well known, these are weakly singular integral operators on a $\cC^1$
surface in $\bbR^3.$ 

Since we are interested in the high frequency asymptotics, the contribution of
the diagonal singularity is of less interest to us than the that of the other
critical points of the phase function
\begin{equation}
  \phi_x(y)=|x-y|.
\end{equation}
For completeness we state the contribution from the diagonal:
\begin{equation}
  \begin{split}
    S^{\diag}_kf(x)&=-\frac{f(x)}{2ik}+O(k^{-2})\\
D^{\diag}_kf(x)&=\frac{H(x)f(x)}{ik}+O(k^{-2}),
  \end{split}
\end{equation}
where $H(x)=\frac 12(\kappa_1(x)+\kappa_2(x))$ is the mean curvature of
$\Gamma$ at $x$ with respect to the outer normal vector $\bn_y.$ Observe that
the diagonal asymptotics of the combined field operator are given by:
\begin{equation}
  A^{\diag}(a,b;k)f(x)=\frac{1+a}{2}f(x)+(2H(x)-b)\frac{f(x)}{2ik}+O(k^{-2})
\end{equation}

Now we turn to the other critical points of $\phi_x:$
\begin{equation}\label{eqn62}
\begin{split}
  \cC_x&=\{y\in\Gamma:\nabla_y\phi_x(y)=\pm \bn_y\}\setminus\{x\}\\
&=\{y\in\Gamma:x=y\mp\phi_x(y) \bn_y\}\setminus\{x\}.
\end{split}
\end{equation}
Even for a strictly convex body, these critical points can be degenerate. For
example, if $\Gamma$ contains an open subset, $U,$ of a sphere, whose center
$x_0$ lies on $\Gamma,$ then $U\subset \cC_{x_0}.$ To use the standard
techniques of stationary phase we restrict our attention to the case of where
$\Gamma$ is $\cC^{\infty}$ and $\cC_x$ consists entirely of non-degenerate
critical points. There are, for example, many boundaries for which $\cC_x$
consists of a global maximum for every $x\in\Gamma.$ A simple analysis of this
property, using the characterization of $\cC_x$ in~\eqref{eqn62}, shows that it
is stable under $\cC^2$ small perturbations.

If $\hx$ is a non-degenerate critical point of $\phi_x,$ then $x$ lies along
the line $\{\hx+t\bn_{\hx}\}.$ We choose orthogonal coordinates so that $\hx=0,$
$\bn_{\hx}=(0,0,1),$ so that $x=(0,0,\pm d),$ with $d=|x-\hx|.$ For a convex body
we always have $-d.$ In these coordinates we represent the surface $\Gamma$ near to $0$ as a graph over
its tangent plane by taking
\begin{equation}
 y=(z,h(z))\text{ with }h(0)=\nabla h(0)=0.
\end{equation}
Let $\psi$ be a smooth function equal to 1 in a neighborhood of $0,$ with a
single critical point of $\phi_x$ in its support. 

With $x=(0,0,\pm d),$ the asymptotic contribution of $\hx$ to $S_kf(x),$ which
we denote by $ S_k^{\hx}f(x),$ is given by
\begin{equation}
  S_k^{\hx}f(x)\sim \int\frac{e^{ik(|z|^2\mp 2dh(z)+h(z)^2+d^2)^{\frac 12}}}
{4\pi(|z|^2\mp 2dh(z)+h(z)^2+d^2)^{\frac 12}}\frac{f(z)\psi(z)dz}{\sqrt{1+|\nabla h(z)|^2}}.
\end{equation}
To apply the stationary phase formula (at least to leading order) we need only
compute the Hessian of the phase function at $0;$ it is
\begin{equation}
  \cH_{x\hx}(0)=\frac{1}{d}\left(\begin{matrix} 1\mp dh_{11}(0)&\mp d h_{12}(0)\\
\mp dh_{21}(0)&1\mp d h_{22}(0)\end{matrix}\right).
\end{equation}
 According to stationary phase we see that
 \begin{equation}
  S_k^{\hx}f(x)\sim\frac{e^{\frac{\pi i\Sgn\cH_{x\hx}(0)}{4}}}{2
    k\sqrt{\det\cH_{x\hx}(0)}}
\frac{e^{ikd}f(\hx)}{d}+ O(k^{-2}).
 \end{equation}
 Recall that $\Sgn\cH_{x\hx}(0)$ is the signature of the Hessian, which is the
 number of positive eigenvalues minus the number of negative eigenvalues. For a
 maximum $\Sgn\cH_{x\hx}(0)=-2.$ Even for strictly convex bodies there can be
 several critical points with different signatures. If this happens then there
 are also points $x$ so that $\cC_x$ contains degenerate critical points.

The calculation for the double layer is similar:
\begin{multline}
  D_k^{\hx}f(x)\sim \int\frac{e^{ik(|z|^2\mp 2dh(z)+h(z)^2+d^2)^{\frac
        12}}(h(z)-\nabla h(z)\cdot z\pm d)f(z)\psi(z)}
{4\pi(|z|^2\mp 2dh(z)+h(z)^2+d^2)}\times\\
\left[\frac{1}{(|z|^2\mp
    2dh(z)+h(z)^2+d^2)^{\frac 12}}-ik\right]dz.
\end{multline}
 Once again, applying the stationary phase formula we see that
 \begin{equation}\label{eqn65}
  D_k^{\hx}f(x)\sim\frac{e^{\frac{\pi i\Sgn\cH_{x\hx}(0)}{4}}}{2
    \sqrt{\det\cH_{x\hx}(0)}}
\frac{\mp ie^{ikd}f(\hx)}{d}+ O(k^{-1}).
 \end{equation}

As noted above, if $\Gamma$ is convex, then $x=(0,0,-d),$ and therefore, we see
that
\begin{equation}
  D_k^{\hx}f(x)-ikS_x^{\hx}f(x)=O(k^{-1}),
\end{equation}
for all $x$ and non-degenerate critical points $\hx\in\cC_x.$ Unless each
$\cC_x$ consists of a single maximum, for every $x\in\Gamma,$ then, for some
points $x\in\Gamma,$ $\phi_x$ is sure to have degenerate critical points.  If
these are isolated and of finite degeneracy, then this difference is still
likely to be $o(1)$ as $k\to\infty.$ Of course if $\Gamma$ is not convex, then
it is perfectly possible that we may sometimes get the $-$ sign in the
asymptotics for $D_k^{\hx}f(x),$ equation~\eqref{eqn65}, so that we obtain
contributions of the form:
\begin{equation}
  D_k^{\hx}f(x)-iakS_x^{\hx}f(x)=\sim-i(1+a)\frac{e^{\frac{\pi i\Sgn\cH_{x\hx}(0)}{4}}}{
    \sqrt{\det\cH_{x\hx}(0)}}
\frac{e^{ik|x-\hx|}f(\hx)}{|x-\hx|}+ O(k^{-1}).
\end{equation}
Since $a>0$ is needed to keep poles out of the upper half plane, this may
prevent any source for the wave equation, defined by solving~\eqref{eqn3} with
an allowable choice of $(a,b),$ from decaying exponentially.

\frenchspacing {
  \bibliographystyle{siam}{\bibliography{alla-k}}}
\end{document}